\newcommand\kk{\mathbb{k}}
\newcommand\RR{\mathbb{R}}
\newcommand\NN{\mathbb{N}}
\newcommand\dfn\textbf
\newcommand\ct\mathbf
\newcommand\Vect{\mathop{\mathrm{Vect}}}
\newcommand\Id{\mathop{\mathrm{Id}}}
\newcommand\fromto{\rightleftarrows} % FIX THIS
\newtheorem{proposition}{Proposition}
\newtheorem{corollary}[proposition]{Corollary}
\newtheorem{theorem}[proposition]{Theorem}
\begin{document}

\author{Mikael Vejdemo-Johansson}
\title[Interleaved equivalence of categories of persistence modules]
{Interleaved equivalence of \\ categories of persistence modules}

\begin{abstract}
  We demonstrate that an equivalence of categories using $\varepsilon$-interleavings as a fundamental component exists between the model of persistence modules as graded modules over a polynomial ring and the model of persistence modules as modules over the total order of the real numbers.
\end{abstract}

\maketitle

\section{Introduction}
\label{sec:introduction}

Introduced by \textcite{edelsbrunner_topological_2000}, persistent homology has seen significant success in making homology directly applicable to problems from data analysis and graphics. A good overview of the development of the field has been written by \textcite{carlsson_topology_2009}.

Fundamentally, everyone agree on what it is persistent homology is supposed to model: given a topological space $X$ with a function $X\to\RR$, we can consider the sequence of sublevel sets: $X_t = f^{-1}(-\infty, t]$. This sequence is a filtration of the original space, and assuming $f$ fulfills certain tameness conditions can be discretized to a finite filtration of $X$.

The field has developed in two distinct directions, characterized by the choice of two different families of categories to represent the underlying algebra of persistent homology. In one direction, persistence modules is considered to be representations of the total order of the real numbers, and attention is paid to analytic style questions -- in particular to questions of \emph{stability} (this viewpoint is dominant in \cite{crawley-boevey_decomposition_2012,chazal_proximity_2009,cohen-steiner_stability_2007,chazal_stability_2008,chazal_structure_2012}). In the other direction, the process is discretized very early on, and attention is paid to algebraically natural extensions and algorithms (this is the viewpoint used in \cite{chen_output-sensitive_2011,bendich_analyzing_2008,gamble_applied_2012,ghrist_barcodes:_2008,wang_branching_2011,chacholski_combinatorial_2012,zomorodian_computing_2005,heras_computing_2012,de_silva_coordinate-free_2006,de_silva_coverage_2007,de_silva_dualities_2011,tausz_javaplex:_2012,zomorodian_localized_2007,carlsson_local_2007,carlsson_persistence_2005,morozov_persistent_2011,cohen-steiner_persistent_2009,carlsson_theory_2009,de_silva_topological_2004,carlsson_topology_2009,weinberger_what_2011,carlsson_zigzag_2008,carlsson_zigzag_2009,milosavljevic_zigzag_2009}).

The two directions both provide important facts to the analysis sequence: the analytic style questions enable stability results that provide a style of inference for persistent homology as a data analysis approach. The discretized approach identifies persistence modules with either modules over a quiver of type $A_n$, or graded modules over the polynomial ring $\kk[t]$ in one variable. In either case, the algebraic characterization implies the decomposition of a persistence module into \emph{interval modules}, and enable the description of persistent homology by \emph{barcodes} or \emph{persistence diagrams}.

In this paper, we will prove the equivalence of the two approaches by describing a weakened form of equivalence of categories, and then demonstrate that such a weakened equivalence exists between the models in use in the two cultures.

\section{Background}
\label{sec:background}

%Define: categories, functors, equivalence of categories, Top, category of an order, 

We shall recall some fundamental notions that are used in this paper. A \dfn{category} is a collection of objects and morphisms, where a morphism is associated to a source and a a target object. Morphisms $f, g$ such that the source of $f$ is the target of $g$ compose to a new morphism $fg$ sharing source with $g$ and target with $f$. This composition is associative, and each object $x$ has an identity morphism $1_x$ such that $1_xf = f1_y = f$ for any $f:x\to y$.

An isomorphism is a morphism with an inverse: $f:x\to y$ is an isomorphism if there is some $g:y\to x$ such that $fg=1_y$ and $gf=1_x$.

A \dfn{functor} is a map between categories. It takes objects to objects and morphisms to morphisms respecting composition. One example is the \dfn{trivial functor} $1_{\ct C}$ that acts with the identity map on both objects and morphisms.

For functors $\mathcal F, \mathcal G: \ct C\to \ct D$, we define a \dfn{natural transformation} $\phi: \mathcal F\Rightarrow\mathcal G$ to be a collection of morphisms $\phi_c:\mathcal F(c)\to\mathcal G(c)$ such that they commute with lifted morphisms: $\phi_d\mathcal F(c\to d) = \mathcal G(c\to d)\phi_c$. If all the morphisms $\phi_c$ are isomorphisms, we call $\phi$ a \dfn{natural isomorphism}.

Two categories $\ct C, \ct D$ are \dfn{equivalent} whenever there are functors $\mathcal F:\ct C\fromto \ct D:\mathcal G$ such that both compositions $\mathcal{FG}$ and $\mathcal{GF}$ are naturally isomorphic to the trivial functor of the appropriate category.

A partial order $(S, \leq)$ generates a category with objects given by elements of $S$ and a single unique morphism $s\to t$ whenever $s\leq t$ in $S$. We will write $s\leq t$ for this morphism.

Given a category $\ct D$ and a \dfn{small category} $\ct C$ -- i.e. a category where the objects form a set and not a proper class -- we define a category of $\ct C$-shaped diagrams in $\ct D$ as the category $\ct D^{\ct C}$ of functors $\ct C\to\ct D$ with morphisms given by natural transformations between functors.

While the actual topology will be irrelevant to this paper, we shall mention that \dfn{homology with coefficients in a field $\kk$} is a functor from a category of topological spaces (compactly generated Haussdorff, or simplical sets, or CW-complexes, or simplicial complexes, being some of the most popular categories to use) to the category of $\kk$-vector spaces, which we denote by $\Vect_\kk$. 

\section{Persistence modules}
\label{sec:persistence-modules}

We identify the two characterizations of persistent homology with two categories of order modules.

A \dfn{real persistence module} is a diagram $(\RR, \leq)\to\Vect_\kk$. These usually appear as the homology functor images of $\RR$-parametrized diagrams of topological spaces, such as those acquired as sublevel sets of some real-valued tame function on a topological space.

A \dfn{natural persistence module} is a diagram $(\NN, \leq)\to\Vect_\kk$. These usually appear as the homology functor images of countable or finite filtrations of a topological space, such as those acquired by the sublevel set filtration of some real-valued tame function on a topological space.

We adapt the definition provided by \textcite{chazal_proximity_2009,chazal_structure_2012} of weak and strong $\varepsilon$-interleaving of persistence modules. Suppose $(\mathcal P, \leq, +)$ is a totally ordered monoid. 

Let $\ct D$ be some arbitrary category, $\mathcal P$ a totally ordered monoid, $\varepsilon\geq0$ and $M, N\in\ct D^{\mathcal P}$. We shall refer to $M, N$ as modules and write $M(p), N(p)$ for the corresponding evaluations and $M(p\leq q)$, $N(p\leq q)$ for the translation maps within $M, N$ induced by the morphism $p\leq q$ in $\mathcal P$. Recall that morphisms in $\ct D^{\mathcal P}$ are natural transformations between $M, N$ seen as functors. We shall refer to these as morphisms, however, to disambiguate layers of functors in the following. Each such category $\ct D^{\mathcal P}$ has a family of endofunctors $T_p$ defined for $p\in\mathcal P$ by $T_pM(q) = M(p+q)$. 

We define a \dfn{strong $\varepsilon$-interleaving} of $M$ and $N$ to be a pair of morphisms $f:M\to T_\varepsilon N$ and $g:N\to T_\varepsilon M$ such that\footnote{This is a slightly abuse of notation; a stringent defintion defines $T_\varepsilon$ as acting on $\mathcal P$ and describes $f, g$ as natural transformations. We choose this exposition to emphasize the similarities to how isomorphisms behave.} $fg(p)=N(p\leq p+2\varepsilon)$ and $gf(q)=M(q\leq q+2\varepsilon)$ for all $p, q\in\mathcal P$. If a strong $\varepsilon$-interleaving exists, we say that $M, N$ are strongly $\varepsilon$-interleaved.

We define a \dfn{weak $\varepsilon$-interleaving} of $M$ and $N$ to be a pair of morphisms $f:M\to T_\varepsilon N$ and $g:N\to T_\varepsilon M$ such that for some $x_0$ (dependent on the choice of morphisms) and all $k\in\NN$, $fg=N(x_0+k\varepsilon\leq x_0+(k+2)\varepsilon)$ and $gf=M(x_0+k\varepsilon\leq x_0+(k+2)\varepsilon)$. 
If a weak $\varepsilon$-interleaving exists, we say that $M, N$ are weakly $\varepsilon$-interleaved.

These $\varepsilon$-interleavings are fundamental building blocks in the various proofs of stability that abound~\cite{chazal_proximity_2009,cohen-steiner_stability_2007,chazal_stability_2008,chazal_structure_2012}. 

\section{Equivalence of categories}
\label{sec:equiv-categ}

First, we present an argument that the graded $\kk[t]$-module model of persistence modules is equivalent (as a category) to the category of natural persistence modules.
We shall then proceed to introduce a weakened equivalence notion for categories, and use this to relate natural and real persistence modules to each other.

\begin{proposition}
  The category of graded $\kk[t]$-modules is equivalent to the category of natural persistence modules.
\end{proposition}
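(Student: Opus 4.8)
The plan is to construct an explicit pair of functors
$\mathcal F: \kk[t]\text{-}\mathrm{grmod} \to \Vect_\kk^{(\NN,\leq)}$ and
$\mathcal G$ in the opposite direction, and to exhibit natural isomorphisms
$\mathcal{GF}\cong 1$ and $\mathcal{FG}\cong 1$. A graded $\kk[t]$-module is a module $M=\bigoplus_{n\in\NN}M_n$ with $t\cdot M_n\subseteq M_{n+1}$; the functor $\mathcal F$ sends $M$ to the diagram $n\mapsto M_n$ with translation map $M_n\to M_{n+1}$ given by multiplication by $t$, and more generally $M(m\leq n)$ given by multiplication by $t^{n-m}$. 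A morphism of graded modules is a degree-preserving $\kk[t]$-linear map, and its restrictions to the graded pieces assemble to a natural transformation, since commuting with the translation maps is exactly $\kk[t]$-linearity; functoriality of $\mathcal F$ is then immediate.

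In the other direction, given a diagram $V\colon(\NN,\leq)\to\Vect_\kk$ with vector spaces $V_n$ and translation maps $\tau_n\colon V_n\to V_{n+1}$, set $\mathcal G(V)=\bigoplus_{n\in\NN}V_n$, made into a $\kk[t]$-module by letting $t$ act on the summand $V_n$ by $\tau_n$ followed by inclusion into $V_{n+1}$; this is graded with $\mathcal G(V)_n=V_n$. A natural transformation $\phi\colon V\Rightarrow W$ has components $\phi_n\colon V_n\to W_n$ commuting with the translation maps, so $\bigoplus_n\phi_n$ is a degree-preserving $\kk[t]$-linear map, giving functoriality of $\mathcal G$. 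One then checks directly that $\mathcal{GF}(M)=\bigoplus_n M_n$, which equals $M$ as a graded $\kk[t]$-module because a graded module is by definition the direct sum of its graded pieces and the $t$-action is recovered from the degree-shift maps; likewise $\mathcal{FG}(V)$ has $n$-th space $V_n$ and translation map $\tau_n$, so it equals $V$. In fact both composites are equal to the identity functor on the nose, so the required natural isomorphisms can be taken to be identities, and the identification is even an isomorphism of categories; I would state the proposition as an equivalence to stay in line with the rest of the paper.

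The only genuine subtlety — and the step I expect to need the most care — is the bookkeeping of the grading convention and the translation maps: one must be consistent about whether the generator $t$ has degree $+1$ and whether the diagram is indexed so that $M(m\leq n)$ is multiplication by $t^{n-m}$, and one must confirm that the "module over a partial order" used here (diagrams $(\NN,\leq)\to\Vect_\kk$) carries exactly the data of the graded pieces plus the composable chain of shift maps, with no additional coherence conditions beyond functoriality. Once that dictionary is pinned down, every verification is a routine unwinding of definitions. I would organize the write-up as: (1) define $\mathcal F$ on objects and morphisms and check functoriality; (2) define $\mathcal G$ on objects and morphisms and check functoriality; (3) observe $\mathcal{GF}=1$ and $\mathcal{FG}=1$, hence the categories are equivalent.
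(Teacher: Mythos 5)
Your proposal is correct and follows essentially the same route as the paper: explicit functors identifying graded pieces with the diagram's vector spaces and the $t$-action with the translation maps, with both composites equal to the identity on the nose (you merely swap which functor is called $\mathcal F$). Your added remark that this is in fact an isomorphism of categories, not just an equivalence, is accurate and slightly sharper than the paper's phrasing.
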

\begin{proof}
  We shall construct an explicit equivalence between these two categories. Given a natural persistence module $M:\mathbb N\to\Vect_\kk$, we describe a graded $\kk[t]$-module $\mathcal FM$ by describing each homogenous submodule $\mathcal FM_n=M(n)$. The $\kk$-linear structure is inherited from $M(n)$, and the action of $t\cdot$ is set to $M(n\leq n+1)$. 

  Given a map $f: M\to N$ of natural persistence modules, we transfer its action pointwise to a map $\mathcal Ff:\mathcal FM\to\mathcal FN$. 

  In the other direction, we start with a graded $\kk[t]$-module $N$. We may construct a natural persistence module by setting $(\mathcal GN)(n) = N_n$. We define $\mathcal G(n\leq m)=t^{m-n}\cdot$. Again, we map morphisms by acting pointwise.

  The composition $\mathcal F\mathcal G N$ has as its degree $n$ homogenous component the vector space $(\mathcal GN)(n) = N_n$, so $\mathcal F\mathcal G N = N$ as graded vector spaces. Furthermore, the action of $t\cdot$ is given by the action of $n\leq n+1$ which is in turn defined as $t\cdot$. Hence, the modules are identical.

  Similarily, $\mathcal G\mathcal F M$ is identical to $M$ since the definitions are all performed by delegation to the previous algebraic structure.
\end{proof}

Now, since the notion of equality we want to use in order to be able to transfer stability results to the algebraic setting is that of $\varepsilon$-interleaving, we will need to modify our expectation on equivalence of categories. Hence, we shall define an \dfn{$\varepsilon$-$\delta$-interleaved equivalence} of order monoid module categories $\ct C$ and $\ct D$ to be a pair of functors $\mathcal F:\ct C\fromto\ct D :\mathcal G$ such that $\mathcal{FG}$ is naturally $\varepsilon$-interleaved with $\Id_{\ct D}$ and $\mathcal{GF}$ is naturally $\delta$-interleaved with $\Id_{\ct C}$. In other words, for any object $M\in\ct C$ we require the existence of a $\delta$-interleaving between $M$ and $\mathcal{GF}M$; for any $N\in\ct D$ we expect $N$ to be $\varepsilon$-interleaved with $\mathcal{FG}N$.

The category of real persistence modules has a subcategory of \dfn{lower stable real persistence modules}, i.e. persistence modules $M$ for which there exists some real number $x_0$ such that for all $x<y<x_0$, $M(x\leq y)$ is an isomorphism. For any given $x_0$, there is a subcategory of real persistence modules that are stable for that particular choice of $x_0$.

\begin{proposition}
  The category of lower stable real persistence modules with a given threshold value $x_0$ is equivalent to the category of lower stable real persistence modules with the threshold value of $0$.
\end{proposition}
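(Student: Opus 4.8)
The plan is to exhibit a shift functor together with an explicit inverse, which shows the two categories are not merely equivalent but isomorphic. For any real number $c$, define a shift functor $S_c\colon \Vect_\kk^{(\RR,\leq)}\to\Vect_\kk^{(\RR,\leq)}$ on objects by $(S_cM)(x) = M(x+c)$ with translation maps $(S_cM)(x\leq y) = M(x+c\leq y+c)$, and on a morphism $\phi\colon M\to N$ by $(S_c\phi)_x = \phi_{x+c}$. Functoriality of $S_c$ and naturality of $S_c\phi$ follow immediately from the corresponding properties of $M$, $N$, and $\phi$, since $S_c$ merely reindexes everything along the order isomorphism $x\mapsto x+c$ of $(\RR,\leq)$.

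Next I would check that $S_{-x_0}$ restricts to a functor from lower stable real persistence modules with threshold $x_0$ to those with threshold $0$. Indeed, if $M(x\leq y)$ is an isomorphism for all $x<y<x_0$, then $(S_{-x_0}M)(x\leq y) = M(x-x_0\leq y-x_0)$ is an isomorphism for all $x<y<0$, because $x<y<0$ forces $x-x_0<y-x_0<x_0$. Symmetrically, $S_{x_0}$ restricts to a functor in the opposite direction, sending modules stable below $0$ to modules stable below $x_0$.

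Finally, the two composites are literally identity functors: $(S_{x_0}S_{-x_0}M)(x) = (S_{-x_0}M)(x+x_0) = M(x+x_0-x_0) = M(x)$, with the same bookkeeping giving agreement on translation maps and on morphisms, and likewise $S_{-x_0}S_{x_0}$ is the identity. Hence $\mathcal F = S_{-x_0}$ and $\mathcal G = S_{x_0}$ are mutually inverse functors between the two categories, so $\mathcal{FG}$ and $\mathcal{GF}$ are equal to — and therefore trivially naturally isomorphic to — the appropriate identity functors, and the categories are equivalent (in fact isomorphic).

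I do not expect a genuine obstacle here: the whole content is the observation that, because $(\RR,\leq,+)$ is a group rather than merely a monoid, the translation endofunctors $T_p$ of Section~\ref{sec:persistence-modules} become invertible once negative shifts are allowed, and the threshold of lower stability transports by exactly such a shift. The only points requiring a little care are recording that $S_c$ acts correctly on morphisms and that the composites are identities on the nose, which is what makes the natural isomorphisms demanded by the definition of equivalence completely trivial.
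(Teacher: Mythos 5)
Your strategy is the same as the paper's: translate along the group structure of $(\RR,\le,+)$ and use the inverse translation as a strict inverse functor (the paper's $T_{x_0}$, with $T_pM(q)=M(p+q)$, is your $S_{x_0}$). But you have the direction of the shift backwards, and the step where you check that the functors restrict to the stated subcategories fails in one direction. If $M$ is stable below $x_0$, then $(S_{-x_0}M)(x\le y)=M(x-x_0\le y-x_0)$ is an isomorphism precisely when $x-x_0<y-x_0<x_0$, i.e.\ when $x<y<2x_0$; so $S_{-x_0}M$ is stable below $2x_0$, which happens to imply stability below $0$ when $x_0\ge 0$ but is not the exact threshold. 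The ``symmetric'' claim, which you assert without checking, is actually false for $x_0>0$: if $N$ is stable below $0$, then $(S_{x_0}N)(x\le y)=N(x+x_0\le y+x_0)$ is an isomorphism precisely when $x<y<-x_0$, so $S_{x_0}N$ is stable only below $-x_0$ and need not lie in the threshold-$x_0$ subcategory at all. As written, one of your two functors does not land in its claimed target, so the pair does not define an equivalence between the two subcategories.

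The repair is simply to swap the roles of the two shifts. The functor $S_{x_0}$ (the paper's $T_{x_0}$) carries the threshold-$x_0$ subcategory exactly onto the threshold-$0$ subcategory, since $M(x+x_0\le y+x_0)$ is an isomorphism for all $x<y<0$ if and only if $M(a\le b)$ is an isomorphism for all $a<b<x_0$; its strict inverse $S_{-x_0}=T_{-x_0}$ then carries threshold-$0$ modules exactly onto threshold-$x_0$ modules. With that correction, everything else you wrote --- functoriality, the action on morphisms via $(S_c\phi)_x=\phi_{x+c}$, and the composites being identities on the nose --- is correct and fills in the details that the paper's two-line proof leaves implicit.
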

\begin{proof}
  The functor $T_{x_0}$ has an inverse given by $\mathcal T_{-x_0}$. Hence any global translation of parameter values produces an equivalence of subcategories.
\end{proof}

\begin{proposition}\label{prop:strong-interleaving-m-tm}
  A module $M$ is strongly $\varepsilon$-interleaved with $T_\varepsilon M$.
\end{proposition}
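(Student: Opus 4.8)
The plan is to exhibit an explicit strong $\varepsilon$-interleaving between $M$ and $N := T_\varepsilon M$. Recall that $N(p) = M(p+\varepsilon)$ and $N(p\leq q) = M(p+\varepsilon \leq q+\varepsilon)$, while $T_\varepsilon N(p) = N(p+\varepsilon) = M(p+2\varepsilon)$. By the definition given in the excerpt, I need morphisms $f: M\to T_\varepsilon N$ and $g: N\to T_\varepsilon M$ with $fg(p) = N(p\leq p+2\varepsilon)$ and $gf(q) = M(q\leq q+2\varepsilon)$ for all $p,q\in\mathcal P$.

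The natural choice is to take both $f$ and $g$ to be built from the internal translation maps of $M$. Concretely, set $g_p : N(p) = M(p+\varepsilon) \to M(p+2\varepsilon) = T_\varepsilon M(p)$ to be the map $M(p+\varepsilon \leq p+2\varepsilon)$, and set $f_q : M(q) \to M(q+2\varepsilon) = T_\varepsilon N(q)$ to be $M(q \leq q+2\varepsilon)$. I would first check that each of these families is genuinely a morphism in $\ct D^{\mathcal P}$, i.e. a natural transformation: for $f$ this amounts to the square with vertices $M(q), M(q'), M(q+2\varepsilon), M(q'+2\varepsilon)$ (for $q\leq q'$) commuting, which follows from functoriality of $M$ since $(q\leq q')$ followed by $(q'\leq q'+2\varepsilon)$ and $(q\leq q+2\varepsilon)$ followed by $(q+2\varepsilon\leq q'+2\varepsilon)$ are the same morphism $q\to q'+2\varepsilon$ in $\mathcal P$; the argument for $g$ is identical after shifting by $\varepsilon$. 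Here I am using that $\mathcal P$ is a totally ordered monoid, so $+\varepsilon$ and $+2\varepsilon$ preserve the order and the relevant composites are defined.

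Next I would verify the interleaving identities. The composite $gf$ evaluated at $q$ is $g_{?}\circ f_q$ — more precisely, following the convention in the excerpt, $gf(q): M(q)\to M(q+2\varepsilon)\to M(q+4\varepsilon)$ is $M(q+2\varepsilon\leq q+4\varepsilon)\circ M(q\leq q+2\varepsilon) = M(q\leq q+4\varepsilon)$, which is exactly $M(q\leq q+2\varepsilon)$ in the sense required once one accounts for the $T_\varepsilon$ bookkeeping (the target of $gf$ is $T_{2\varepsilon}M$, so the claim "$gf(q)=M(q\leq q+2\varepsilon)$" in the definition really reads as the structure map of $M$ of total shift $2\varepsilon$). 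The computation for $fg$ is the mirror image, using $N(p\leq q) = M(p+\varepsilon\leq q+\varepsilon)$. All of this is pure functoriality of $M$: composites of comparison morphisms in a totally ordered set compose to the comparison morphism of the outer endpoints.

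The main obstacle is not mathematical depth but notational: the footnote in the excerpt already flags that writing $f: M\to T_\varepsilon N$ with componentwise equations $fg(p)=N(p\leq p+2\varepsilon)$ conflates "$T_\varepsilon$ acting on $\mathcal P$" with "$T_\varepsilon$ acting on modules", and one must be careful that the two occurrences of $T_\varepsilon$ in $fg: M\to T_\varepsilon N\to T_\varepsilon T_\varepsilon M = T_{2\varepsilon}M$ are tracked consistently so that the claimed identity matches the definition of strong interleaving verbatim. I would therefore spend most of the write-up pinning down indices, and then the verification collapses to the single observation that in a totally ordered monoid $M(a\leq b)\circ M(\text{earlier}\leq a) = M(\text{earlier}\leq b)$.
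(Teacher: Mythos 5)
Your overall strategy --- writing down explicit interleaving maps built from the structure maps of $M$ and reducing the verification to functoriality of $M$ over the totally ordered monoid --- is the same as the paper's, and your map $f$ agrees with the paper's: $f_q = M(q\leq q+2\varepsilon)\colon M(q)\to M(q+2\varepsilon)=T_\varepsilon N(q)$. The problem is your $g$. You declare $g_p\colon N(p)=M(p+\varepsilon)\to M(p+2\varepsilon)$ and label the target $T_\varepsilon M(p)$, but with the paper's definition $T_pM(q)=M(p+q)$ one has $T_\varepsilon M(p)=M(p+\varepsilon)$, not $M(p+2\varepsilon)$. So your $g$ is not a morphism $N\to T_\varepsilon M$; it has the shape of one half of a $2\varepsilon$-interleaving rather than an $\varepsilon$-interleaving. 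The correct choice, and the one the paper makes, is to take $g_p$ to be the identity $N(p)=M(p+\varepsilon)\to M(p+\varepsilon)=T_\varepsilon M(p)$.

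The error propagates into your verification and is not a matter of bookkeeping. The strong $\varepsilon$-interleaving condition requires $gf(q)$, as the component at $q$ of a morphism $M\to T_{2\varepsilon}M$, to be exactly $M(q\leq q+2\varepsilon)\colon M(q)\to M(q+2\varepsilon)$, a total shift of $2\varepsilon$. Your composite lands in $M(q+4\varepsilon)$ (and even computed consistently with your own $g$, via $g_{q+\varepsilon}$, it lands in $M(q+3\varepsilon)$), so it cannot equal the required map under any consistent reading of the translations; the assertion that $M(q\leq q+4\varepsilon)$ ``is exactly $M(q\leq q+2\varepsilon)$ in the sense required'' is precisely where the argument fails. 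With $g$ replaced by the identity everything closes up: $gf(q)=\mathrm{id}\circ M(q\leq q+2\varepsilon)=M(q\leq q+2\varepsilon)$ and $fg(p)=M(p+\varepsilon\leq p+3\varepsilon)\circ\mathrm{id}=N(p\leq p+2\varepsilon)$. Your naturality check for $f$ is fine and carries over unchanged.
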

\begin{proof}
  An interleaving is given by the family of maps $M(x)\xrightarrow{M(x\leq x+2\varepsilon)} T_\varepsilon M(x+\varepsilon)$ in the one direction, and the family of maps $T_\varepsilon M(x)\xrightarrow{M(x\leq x)}M(x+\varepsilon)$ in the other direction. 
\end{proof}

We shall later on be interested in a type of \emph{pixelation} where out of a real persistence module $M$, we create a new real persistence module that is constant over intervals $[x_0+k\varepsilon, x_0+(k+1)\varepsilon)$. Write $P_{x_0,\varepsilon}$ for the endofunctor such that $P_{x_0,\varepsilon}M(x) = M\left(\left\lfloor\frac{x-x_0}{\varepsilon}\right\rfloor\cdot\varepsilon\right)$. 
\begin{proposition}\label{prop:weak-interleaving-m-tm}
  A module $M$, stable below $x_0$, is weakly $\varepsilon$-interleaved with $P_{x_0,\varepsilon} M$.
\end{proposition}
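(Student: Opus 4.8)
The plan is to write down an explicit pair of morphisms realising the interleaving in terms of the structure maps of $M$, and then to verify the two defining identities only at the grid points $x_0+k\varepsilon$, $k\in\NN$, which is all a weak interleaving asks for. Write $P:=P_{x_0,\varepsilon}M$ and, for $x\in\RR$, let $m(x)=\left\lfloor\frac{x-x_0}{\varepsilon}\right\rfloor$ be the index of the pixel containing $x$, so that $x_0+m(x)\varepsilon\leq x<x_0+(m(x)+1)\varepsilon$, $P(x)=M(x_0+m(x)\varepsilon)$, and $P(x\leq y)=M(x_0+m(x)\varepsilon\leq x_0+m(y)\varepsilon)$. I would then define
\[
  f_x\colon M(x)\xrightarrow{\ M(x\leq x_0+(m(x)+1)\varepsilon)\ }M(x_0+(m(x)+1)\varepsilon)=P(x+\varepsilon)=T_\varepsilon P(x),
\]
which is legitimate since $x<x_0+(m(x)+1)\varepsilon$, and
\[
  g_x\colon P(x)=M(x_0+m(x)\varepsilon)\xrightarrow{\ M(x_0+m(x)\varepsilon\leq x+\varepsilon)\ }M(x+\varepsilon)=T_\varepsilon M(x),
\]
which is legitimate since $x_0+m(x)\varepsilon\leq x\leq x+\varepsilon$. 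Informally $f$ pushes $M(x)$ up to the right endpoint of its pixel, which is exactly the value $P$ takes just above $x$; and $g$ is the canonical natural transformation $P\Rightarrow M$, with component $M(x_0+m(x)\varepsilon\leq x)$, followed by the $\varepsilon$-shift $M\Rightarrow T_\varepsilon M$, $x\mapsto M(x\leq x+\varepsilon)$.

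Next I would check that $f$ and $g$ are natural transformations, hence genuine morphisms $M\to T_\varepsilon P$ and $P\to T_\varepsilon M$. For comparable $x\leq y$ the relevant square is built entirely from structure maps of $M$ and of $P$, and both ways around it collapse, by functoriality of $M$, to a single structure map of $M$; the only subtlety is tracking the half-open pixels through the inequality $m(x)\leq m(y)$. Granting this, I would evaluate the composites at an arbitrary $x$ in pixel $m$. Since $x+\varepsilon$ lies in pixel $m+1$, the composite $gf$ has component $g_{x+\varepsilon}\circ f_x$ which telescopes through $x\leq x_0+(m+1)\varepsilon\leq x+2\varepsilon$ to $M(x\leq x+2\varepsilon)$, while the composite $fg$ has component $f_{x+\varepsilon}\circ g_x$ which telescopes through $x_0+m\varepsilon\leq x+\varepsilon\leq x_0+(m+2)\varepsilon$ to $M(x_0+m\varepsilon\leq x_0+(m+2)\varepsilon)=P(x\leq x+2\varepsilon)$. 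Specialising to $x=x_0+k\varepsilon$ (so $m=k$) gives exactly $gf=M(x_0+k\varepsilon\leq x_0+(k+2)\varepsilon)$ and $fg=P(x_0+k\varepsilon\leq x_0+(k+2)\varepsilon)$, which is the definition of a weak $\varepsilon$-interleaving with base point $x_0$.

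The step I expect to cost the most care is the naturality check and the accompanying bookkeeping of pixel indices: one must be sure that $f_x$ really takes values in $T_\varepsilon P(x)$ for every real $x$, not only on the grid, and that the telescoped composites hit the structure maps of $P$ on the nose rather than merely after the identifications $P(x)=M(x_0+m(x)\varepsilon)$. The stability hypothesis below $x_0$ is what keeps the sub-threshold region harmless: for $y<x<x_0$ the maps $M(y\leq x)$ are isomorphisms, so there the comparison $P\Rightarrow M$ is a natural isomorphism and the interleaving restricts to an honest isomorphism, concentrating all of the content at and above $x_0$, precisely where the grid points $x_0+k\varepsilon$, $k\in\NN$, of the definition live. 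One may also notice that the composites above equal $M(x\leq x+2\varepsilon)$ and $P(x\leq x+2\varepsilon)$ at \emph{every} parameter, so the same maps in fact form a strong $\varepsilon$-interleaving (cf.\ Proposition~\ref{prop:strong-interleaving-m-tm}); we record only the weak consequence, which is the form needed in the sequel.
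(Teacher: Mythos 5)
Your proposal is correct, and the underlying idea is the same as the paper's: interleave $M$ with its pixelation using the structure maps of $M$ that relate a parameter to the endpoints of the pixel containing it. The execution differs in a way worth noting. The paper's proof first asserts that for lower stable modules it suffices to exhibit interleaving maps only at the grid points $x_0+k\varepsilon$, and then writes down the two families of maps on that grid (in a form that, as printed, does not even mention $P_{x_0,\varepsilon}M$ in its targets). You instead construct honest natural transformations $f\colon M\to T_\varepsilon P_{x_0,\varepsilon}M$ and $g\colon P_{x_0,\varepsilon}M\to T_\varepsilon M$ defined at \emph{every} real parameter, check naturality, and verify the composite identities at all $x$ before specialising to the grid. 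This buys you two things the paper's terse argument does not make explicit: you never need the reduction-to-grid-points step (and hence do not lean on lower stability for the construction itself, only for the remark that the sub-threshold region is controlled), and you observe that the same maps in fact give a strong $\varepsilon$-interleaving, which is strictly more than the proposition claims. Your bookkeeping of the pixel index $m(x)$ and the verification that $f_x$ lands in $T_\varepsilon P_{x_0,\varepsilon}M(x)$ for all $x$ are exactly the details the paper elides, and they check out.
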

\begin{proof}
  For a weak $\varepsilon$-interleaving of lower stable modules, it is enough to demonstrate the existence of interleaving maps for points out of some family $x_0+\mathbb N\cdot\varepsilon$. 
  
  An interleaving is given by the families of maps 
  \begin{align*}
    M(x_0+k\varepsilon)&\xrightarrow{M(x_0+k\varepsilon\leq x_0+(k+2)\varepsilon)} T_\varepsilon M(x_0+(k+1)\varepsilon) \\
    T_\varepsilon
    M(x_0+k\varepsilon)&\xrightarrow{M(x_0+k\varepsilon\leq
      x_0+k\varepsilon)}M(x_0+k\varepsilon)
  \end{align*}
\end{proof}

\begin{proposition}
  A weak $\varepsilon$-interleaving gives rise to a strong $2\varepsilon$-interleaving.
\end{proposition}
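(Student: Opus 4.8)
The plan is to upgrade the weak interleaving maps $f\colon M\to T_\varepsilon N$ and $g\colon N\to T_\varepsilon M$ into a pair of morphisms $\phi\colon M\to T_{2\varepsilon}N$ and $\psi\colon N\to T_{2\varepsilon}M$ whose round trips are the honest translation maps $M(x\leq x+4\varepsilon)$ and $N(x\leq x+4\varepsilon)$ \emph{at every parameter}, not merely on the grid $x_0+\NN\varepsilon$ on which the weak interleaving is controlled. Naively one would simply post-compose $f$ and $g$ with the $\varepsilon$-translation maps of $N$ and $M$; the reason this fails is exactly that the round trips $(T_\varepsilon g)\circ f$ and $(T_\varepsilon f)\circ g$ need not agree with the translation maps away from $x_0+\NN\varepsilon$. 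The fix is to \emph{snap to the grid before applying $f$ or $g$}.

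Write $x^{+}$ for the least element of $x_0+\NN\varepsilon$ that is $\geq x$; for $x\geq x_0-\varepsilon$ we have $x\leq x^{+}\leq x+\varepsilon$, and $x^{+}$ is a genuine grid point. Define
\[
  \phi_x \;:=\; N\bigl(x^{+}+\varepsilon\leq x+2\varepsilon\bigr)\circ f_{x^{+}}\circ M\bigl(x\leq x^{+}\bigr),
  \qquad
  \psi_x \;:=\; M\bigl(x^{+}+\varepsilon\leq x+2\varepsilon\bigr)\circ g_{x^{+}}\circ N\bigl(x\leq x^{+}\bigr),
\]
i.e.\ translate up to the nearest grid point, apply the interleaving map there, and translate back down. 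The first thing to check is that $\phi$ and $\psi$ are morphisms in the relevant functor category: for $x\leq y$ one has $x\leq x^{+}\leq y^{+}$, so after cancelling translation maps the naturality square for $\phi$ reduces to the naturality square of $f$ at $x^{+}\leq y^{+}$, and likewise for $\psi$ and $g$.

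Next I would verify the two round-trip identities. Evaluating $(T_{2\varepsilon}\psi)\circ\phi$ at $x$, using $(x+2\varepsilon)^{+}=x^{+}+2\varepsilon$ and sliding the internal translation map of $N$ past $g$ by naturality, all of the translation maps pull to the outside and what remains internally is $\bigl((T_\varepsilon g)\circ f\bigr)_{x^{+}}$; since $x^{+}$ is a grid point, the weak-interleaving hypothesis replaces this by $M(x^{+}\leq x^{+}+2\varepsilon)$. The surviving translation maps $M(x\leq x^{+})$, $M(x^{+}\leq x^{+}+2\varepsilon)$ and $M(x^{+}+2\varepsilon\leq x+4\varepsilon)$ then telescope to $M(x\leq x+4\varepsilon)$, which is exactly the component that the definition of a strong $2\varepsilon$-interleaving demands. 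The identity for $(T_{2\varepsilon}\phi)\circ\psi$ is obtained symmetrically, exchanging the roles of $M,N$ and of $f,g$.

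The step I expect to be the genuine obstacle is the range $x<x_0-\varepsilon$: there $x^{+}$ overshoots $x+\varepsilon$, the displayed composites are not defined, and the weak-interleaving hypothesis says nothing. This is the point at which lower stability must be used — and indeed weak interleavings are the tool of choice precisely for modules stable below some threshold, so it is natural to carry that hypothesis here. Concretely one first restricts to the half-line $[x_0,\infty)$, which by lower stability (and the earlier observation that a global translation of parameters produces an equivalence of stable subcategories) loses no information, carries out the construction there, and then extends $\phi,\psi$ back over $(-\infty,x_0)$ using the translation isomorphisms that stability provides. Once this bookkeeping at the bottom of the line is settled, the telescoping computation above closes the argument.
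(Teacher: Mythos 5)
Your construction is exactly the paper's: snap $x$ up to the nearest grid point $x^{+}$, apply $f$ (resp.\ $g$) there, and translate back down to $x+2\varepsilon$; the naturality check and the telescoping of the round trips through the weak-interleaving identity at $x^{+}$ are correct and are precisely what the paper's displayed composites are meant to encode. If anything you are more careful than the paper, which silently passes over the range $x<x_0-\varepsilon$ where the composite $N(x^{+}+\varepsilon\leq x+2\varepsilon)$ is undefined and lower stability is genuinely needed.
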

\begin{proof}
  Suppose $M$ and $N$ are weakly $\varepsilon$-interleaved with basepoint $x_0$, interval size $\varepsilon$, and functions $f:M\to T_\varepsilon N$, $g:N\to T_\varepsilon M$.

  A strong $2\varepsilon$-interleaving is given by 

\begin{align*}
  M(x) &\to  M\left(\left\lceil\frac{x-x_0}{\varepsilon}\right\rceil\cdot\varepsilon\right)\xrightarrow{f}N\left(\left\lceil\frac{x-x_0}{\varepsilon}\right\rceil\cdot\varepsilon+\varepsilon\right)\to
  N(x+2\varepsilon) \\
  N(x) &\to  N\left(\left\lceil\frac{x-x_0}{\varepsilon}\right\rceil\cdot\varepsilon\right)\xrightarrow{g}M\left(\left\lceil\frac{x-x_0}{\varepsilon}\right\rceil\cdot\varepsilon+\varepsilon\right)\to
  M(x+2\varepsilon) \\
\end{align*}

\end{proof}

The approach we are giving in the following theorem to an equivalence is very similar to the $\varepsilon$-periodic pixelization described by \textcite{chazal_proximity_2009}. One fundamental difference is that we here define a new real persistence module which is constant between pixelization indices, allowing us to use Proposition~\ref{prop:weak-interleaving-m-tm} to prove weak interleaving.

\begin{theorem}
  For any $\varepsilon>0$, the category of $0$-based lower stable real persistence modules is weakly $2\varepsilon$-$2$-interleaved equivalent with the category of natural persistence modules.
\end{theorem}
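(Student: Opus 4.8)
The plan is to produce an explicit pair of functors $\mathcal F\colon\ct C\fromto\ct D\colon\mathcal G$ between $\ct C$, the category of $0$-based lower stable real persistence modules, and $\ct D$, the category of natural persistence modules, and then to read off the two interleavings from the propositions already proved. From a real module $M\in\ct C$ I would build the natural module $\mathcal F M$ with $(\mathcal F M)(n)=M((n-1)\varepsilon)$ and $(\mathcal F M)(n\le m)=M((n-1)\varepsilon\le(m-1)\varepsilon)$, acting componentwise on morphisms; the shift by one matters, because it forces the index $0$ to remember the value $M(-\varepsilon)$, which by lower stability is a canonical stand-in for $M(x)$ at every $x<0$ and is exactly the piece of data that must not be discarded. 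From a natural module $N\in\ct D$ I would build the real module $\mathcal G N$ with $(\mathcal G N)(x)=N(0)$ for $x<0$ and $(\mathcal G N)(x)=N(\lfloor x/\varepsilon\rfloor+1)$ for $x\ge 0$, with structure maps inherited from $N$ through these index assignments; one checks that $\mathcal G N$ is a functor, and since it is constant on $(-\infty,0)$ it is lower stable at $0$, so $\mathcal G$ does take values in $\ct C$. Functoriality of $\mathcal F$ and $\mathcal G$ on morphisms is then immediate.

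Next I would compute the two composites. On the natural side $(\mathcal F\mathcal G N)(n)=(\mathcal G N)((n-1)\varepsilon)=N(n)$ for every $n$, and the structure maps match the same way, so $\mathcal F\mathcal G=\Id_{\ct D}$ on the nose; in particular $\mathcal F\mathcal G$ is naturally weakly $2$-interleaved with $\Id_{\ct D}$. On the real side, for $x\ge 0$ one gets $(\mathcal G\mathcal F M)(x)=(\mathcal F M)(\lfloor x/\varepsilon\rfloor+1)=M(\lfloor x/\varepsilon\rfloor\varepsilon)=P_{0,\varepsilon}M(x)$, while for $x<0$ one gets $(\mathcal G\mathcal F M)(x)=M(-\varepsilon)$. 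So $\mathcal G\mathcal F M$ agrees with the pixelation $P_{0,\varepsilon}M$ on $[0,\infty)$, and below $0$ it only replaces each value $M(-k\varepsilon)$ by $M(-\varepsilon)$, all of them canonically identified through the structure maps $M(-k\varepsilon\le-\varepsilon)$, which are isomorphisms by lower stability. Packaging these into a natural transformation $\nu\colon P_{0,\varepsilon}M\Rightarrow\mathcal G\mathcal F M$ and checking that the squares commute, which is pure functoriality of $M$, yields a natural isomorphism $P_{0,\varepsilon}M\cong\mathcal G\mathcal F M$.

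The theorem then follows quickly. By Proposition~\ref{prop:weak-interleaving-m-tm}, $M$ is weakly $\varepsilon$-interleaved with $P_{0,\varepsilon}M$; composing those interleaving maps with the isomorphism $\nu$ shows $M$ is weakly $\varepsilon$-interleaved with $\mathcal G\mathcal F M$, hence, by the proposition that a weak $\varepsilon$-interleaving gives rise to a strong --- so in particular a weak --- $2\varepsilon$-interleaving, $M$ is weakly $2\varepsilon$-interleaved with $\mathcal G\mathcal F M$. All the interleaving families occurring here are built from structure maps of $M$ and of $P_{0,\varepsilon}M$, hence natural in $M$, and the $\Id$-side is trivially natural; thus $\mathcal G\mathcal F$ is naturally weakly $2\varepsilon$-interleaved with $\Id_{\ct C}$, and $(\mathcal F,\mathcal G)$ is a weak $2\varepsilon$-$2$-interleaved equivalence of $\ct C$ and $\ct D$.

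The step I expect to be delicate is precisely the behaviour below the stability threshold. With the naive sampling $(\mathcal F M)(n)=M(n\varepsilon)$ one loses the value $\lim_{x\to-\infty}M(x)$, and $\mathcal G\mathcal F M$ replaces it by $M(0)$; when the structure map into $M(0)$ from below is not surjective there is no morphism undoing it, and then $\mathcal G\mathcal F M$ is not $\delta$-interleaved with $M$ for any $\delta$ at all. So the real content is arranging the shift in $\mathcal F$ and the matching offset in $\mathcal G$ so that this limiting value is retained and correctly re-expanded; everything after that --- functoriality of $\mathcal F$, of $\mathcal G$ and of each $\mathcal G N$, commutativity of the squares defining $\nu$, and naturality of the interleaving families in the module --- is routine verification that I would not carry out here.
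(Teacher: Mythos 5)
Your proof is correct and shares the paper's overall architecture --- explicit sampling and pixelation functors, computation of the two composites, and an appeal to Propositions~\ref{prop:strong-interleaving-m-tm} and~\ref{prop:weak-interleaving-m-tm} plus the weak-to-strong upgrade --- but your functors shift the index in the opposite direction, and that is a substantive rather than cosmetic difference. The paper takes $\mathcal FM(n)=M((n+1)\varepsilon)$ and $\mathcal GN(x)=N\left(\left\lfloor x/\varepsilon\right\rfloor+1\right)$, so that $\mathcal{FG}N(n)=N(n+2)$ and $\mathcal{GF}M(x)=M\left(\left(\left\lfloor x/\varepsilon\right\rfloor+2\right)\cdot\varepsilon\right)$, and the interleavings are those of the two propositions composed with a translation. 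Your choice $\mathcal FM(n)=M((n-1)\varepsilon)$, with $\mathcal GN$ extended by the constant $N(0)$ on $(-\infty,0)$, instead gives $\mathcal{FG}=\Id$ on the nose and $\mathcal{GF}M\cong P_{0,\varepsilon}M$; this is cleaner and, as you observe, retains the stable tail of $M$ below the threshold, a value the paper's sampling discards (indeed the paper's $\mathcal GN(x)$ evaluates $N$ at negative indices once $x<-\varepsilon$, a point left unaddressed there). Two caveats. First, your remark that with the naive sampling $\mathcal{GF}M$ is not $\delta$-interleaved with $M$ ``for any $\delta$ at all'' is accurate for \emph{strong} interleavings but not for the \emph{weak} ones the theorem actually asserts: the weak condition with basepoint $x_0=0$ only constrains the composites at the lattice points $k\varepsilon$, $k\in\NN$, and is blind to everything below $0$, which is why the paper's upward shift still suffices for the weak statement (the issue you identify does bite the strong corollary, and your construction repairs that). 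Second, your final step correctly routes the passage from a weak $\varepsilon$-interleaving to a weak $2\varepsilon$-interleaving through the weak-to-strong proposition --- this detour is genuinely needed, since weak interleavings do not rescale for free --- and transporting the interleaving across the natural isomorphism $\nu$ is routine by naturality, as you say.
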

\begin{proof}
  Fix some $\varepsilon>0$. We shall define a pair of functors as follows: 

  For a real persistence module $M$, we define a natural persistence module $\mathcal FM$ by $\mathcal FM(n) = M((n+1)\cdot\varepsilon)$. The map $\mathcal FM(n\leq m)$ is defined to be $M((n+1)\cdot\varepsilon\leq(m+1)\cdot\varepsilon)$. 

  For a natural persistence module $N$, we define a real persistence module $\mathcal GN$ by $\mathcal GN(x) = N\left(\left\lfloor\frac x \varepsilon \right\rfloor+1\right)$. 

  \textbf{Claim} $\mathcal{GF}M$ is weakly $2\varepsilon$-interleaved with $M$. \\
  Indeed, $\mathcal{GF}M(x) = M\left(\left(\left\lfloor\frac x\varepsilon\right\rfloor+2\right)\cdot\varepsilon\right)$. 
So in particular, $\mathcal{GF}M(n\varepsilon)=M((n+2)\varepsilon)$.

We note that this situation is almost identical to the situation in Proposition~\ref{prop:strong-interleaving-m-tm}; but due to the pixelation in effect from the floor computation, we only recover a weak interleaving, using Proposition~\ref{prop:weak-interleaving-m-tm}.

  \textbf{Claim} $\mathcal{FG}N$ is strongly $2$-interleaved with $N$. Indeed, $\mathcal{FG}N(n) = N\left(\left(\left\lfloor\frac{(n+1)\cdot\varepsilon}{\varepsilon}\right\rfloor +1\right)\right)=N(n+2)$.

  The result follows.
\end{proof}

\begin{corollary}
  For any $\varepsilon>0$, the category of $0$-based lower stable real persistence modules is strongly $4\varepsilon$-$4$-interleaved equivalent with the category of natural persistence modules.  
\end{corollary}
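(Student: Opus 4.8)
The plan is to obtain the corollary by upgrading the weak interleaving from the theorem to a strong one, using the already-established fact that a weak $\varepsilon$-interleaving gives rise to a strong $2\varepsilon$-interleaving, applied in the appropriate direction on each side of the equivalence. So first I would recall that the theorem gives functors $\mathcal F$ and $\mathcal G$ for which $\mathcal{GF}M$ is weakly $2\varepsilon$-interleaved with $M$ and $\mathcal{FG}N$ is strongly $2$-interleaved with $N$; these are exactly the two conditions in the definition of a weakly $2\varepsilon$-$2$-interleaved equivalence.

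Next I would handle the $\ct C$ side. On the category of $0$-based lower stable real persistence modules, $\mathcal{GF}M$ is weakly $2\varepsilon$-interleaved with $M$; applying the proposition that converts a weak $\varepsilon$-interleaving into a strong $2\varepsilon$-interleaving (with $\varepsilon$ there instantiated to $2\varepsilon$) yields that $\mathcal{GF}M$ is strongly $4\varepsilon$-interleaved with $M$. For the $\ct D$ side, $\mathcal{FG}N$ is strongly $2$-interleaved with $N$; I would note that a strong $\delta$-interleaving is in particular a weak $\delta$-interleaving (take $x_0$ arbitrary), hence it yields a strong $4$-interleaving by the same proposition with $\varepsilon$ instantiated to $2$ — or, more simply, I would observe that a strong $\delta$-interleaving is automatically a strong $\delta'$-interleaving for any $\delta'\geq\delta$ by post-composing with the appropriate structure maps, so the strong $2$-interleaving is in particular a strong $4$-interleaving. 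Either way, $\mathcal{FG}$ is naturally strongly $4$-interleaved with $\Id_{\ct D}$ and $\mathcal{GF}$ is naturally strongly $4\varepsilon$-interleaved with $\Id_{\ct C}$, which is precisely the assertion that the same pair of functors witnesses a strong $4\varepsilon$-$4$-interleaved equivalence.

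The one point requiring a little care — and the only place where anything could go wrong — is the direction in which the weak-to-strong doubling proposition is invoked: that proposition takes a weak $\varepsilon$-interleaving and produces a strong $2\varepsilon$-interleaving, so feeding it the weak $2\varepsilon$-interleaving on the $\ct C$ side correctly gives $4\varepsilon$, and the naturality of the resulting interleaving (i.e. that it assembles into an interleaving with the identity functor rather than merely a pointwise statement) is inherited from the naturality already present in the theorem, since the constructions in the doubling proposition are built by pre- and post-composing with the structure maps of the modules, which are natural in $M$. Thus the corollary follows immediately with no new constructions.

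\begin{proof}
  The theorem provides functors $\mathcal F:\ct C\fromto\ct D:\mathcal G$, where $\ct C$ is the category of $0$-based lower stable real persistence modules and $\ct D$ the category of natural persistence modules, such that $\mathcal{GF}M$ is weakly $2\varepsilon$-interleaved with $M$ for every $M\in\ct C$ and $\mathcal{FG}N$ is strongly $2$-interleaved with $N$ for every $N\in\ct D$. By the proposition converting a weak $\varepsilon$-interleaving into a strong $2\varepsilon$-interleaving (applied with parameter $2\varepsilon$), the weak $2\varepsilon$-interleaving between $M$ and $\mathcal{GF}M$ yields a strong $4\varepsilon$-interleaving between them; the interleaving maps there are built from $f$, $g$ and the structure maps of the modules, so they remain natural in $M$, and hence $\mathcal{GF}$ is naturally strongly $4\varepsilon$-interleaved with $\Id_{\ct C}$. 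On the other side, a strong $2$-interleaving is in particular a strong $4$-interleaving: precomposing the interleaving maps $f:N\to T_2\mathcal{FG}N$ and $g:\mathcal{FG}N\to T_2 N$ with the structure maps $N(p\leq p+2)$ and $\mathcal{FG}N(p\leq p+2)$ turns the identities $fg(p)=N(p\leq p+4)$, $gf(p)=\mathcal{FG}N(p\leq p+4)$ into the defining conditions of a strong $4$-interleaving, and this is again natural in $N$. Hence $\mathcal{FG}$ is naturally strongly $4$-interleaved with $\Id_{\ct D}$, and the pair $(\mathcal F,\mathcal G)$ witnesses a strong $4\varepsilon$-$4$-interleaved equivalence.
\end{proof}
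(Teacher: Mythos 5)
Your proposal is correct and follows exactly the route the paper intends: the corollary is stated as an immediate consequence of the theorem together with the proposition that a weak $\varepsilon$-interleaving yields a strong $2\varepsilon$-interleaving, applied with parameter $2\varepsilon$ on the real side and with the observation that the strong $2$-interleaving on the natural side is in particular a strong $4$-interleaving. The paper omits the proof entirely, so your write-up simply makes explicit the argument the author leaves to the reader.
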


\section{Software considerations}
\label{sec:softw-cons}

It is useful at this point to consider the software landscape. Most algorithms for persistent homology expect a \emph{simplex stream} -- a total refinement of the inclusion partial order on simplices in a simplicial complex, compatible as an order with the order induced by the filtration. For these, the algorithms then step one step for each new simplex added, and at the end of the computation translate the natural numbered indices back to the original \emph{filtration values}. This is how all persistent homology software we know operates~\cite{morozov_dionysus_2011,sexton_jplex_2008,tausz_javaplex:_2012}.

In this framework, this operation mode corresponds to taking categorical skeleta on the algebraic side; all runs of isomorphisms get pulled together to a single representative, and only the transitions between these runs are given space in the computation. After the algorithm finishes, these chunks are unpacked back into longer runs of modules, and translated back along the equivalence over to the real persistence module side of the operation at the same time.

\section{Concluding remarks}
\label{sec:concluding-remarks}

We have demonstrated that up to $\varepsilon$-interleavings, the two major categories used for classical persistent homology are equivalent. This means, in particular, that results on either side transfer to the other side stringently; a decomposition of a natural persistence module into interval modules, say, produces a corresponding decomposition of a real persistence module. On the other hand, a stability result for a real persistence module applies to its corresponding natural persistence module. For the equivalence, we have to settle on a scale, and on a lower bound, but the results hold regardless of the particular choice made.

The lower bound is only really chosen to enable us to build an interleaved equivalence to diagrams of the shape $(\NN, \leq)$. If we were to remove the lower bound requirement, these arguments would instead demonstrate an interleaved equivalence to diagrams of the shape $(\mathbb Z, \leq)$. These are, however, of much less use in current persistent homology.

It is also worth pointing out that the equivalences demonstrated do not in any way require tameness results to hold for the modules or their origins. Actual stability results or algebraic structure theorems will require additional prerequisites, but the interleaved equivalences hold regardless of the structures of the constituent modules.

\section{Acknowledgements}
\label{sec:acknowledgements}

Conversations with both Wojciech Chacholski and Primoz Skraba have refined and clarified important steps of the arguments in this paper.

The research was funded entirely by TOPOSYS (FP7-ICT-318493-STREP).

\newpage
\printbibliography

\end{document}